\title{Unique Ergodicity of Non-Linear Filters via Reachability and Uniform Weak Continuity} 
\author{Yunus Emre Demirci}
\address{Queen's University, Department of Mathematics and Statistics, Kingston, Ontario, Canada}
 \email{21yed@queensu.ca}
\author{Serdar Yüksel}
\address{Queen's University, Department of Mathematics and Statistics, Kingston, Ontario, Canada} 
 \email{yuksel@queensu.ca}  
\newcommand{\bea}{\begin{eqnarray}}
\newcommand{\ena}{\end{eqnarray}}
\newcommand{\beas}{\begin{eqnarray*}}
\newcommand{\enas}{\end{eqnarray*}}
\newcommand{\beq}{\begin{equation}}
\newcommand{\enq}{\end{equation}}
\newcommand{\BL}{\operatorname{BL}_1}
\newcommand\norm[1]{\left\lVert#1\right\rVert}
\newcommand{\T}{\mathcal{T}}
\newcommand{\Q}{\mathcal{Q}}
\newcommand{\Z}{\mathcal{Z}}
\newcommand{\X}{\mathbb{X}}
\newcommand{\B}{\mathbb{B}}
\newcommand{\PP}{\mathbb{P}}
\newcommand{\1}{\mathbbm{1}}
\newtheorem{theorem}{Theorem}
\newtheorem{remark}{Remark}%
\newtheorem{definition}{Definition}%
\newtheorem{assumption}{Assumption}%
\newtheorem{corollary}{Corollary}%
\newtheorem{lemma}{Lemma}%
\newcommand{\yed}[1]{{\color{black} #1}}
\keywords{Non-linear filtering, unique ergodicity, filter stability}
\begin{document}

\begin{abstract} 
We present a reachability based approach to establish unique ergodicity of non-linear filter processes where state space of a
hidden Markov model is a compact Polish metric space and the observation space is
a Polish metric space. We also establish a weak convergence result on occupation
measures under such a reachability condition. Our conditions, which are explicit, are complementary to those based on filter stability as demonstrated in examples.
\end{abstract}

\maketitle
\section{Introduction}
Nonlinear filtering plays a central role in partially observable stochastic systems. In this paper, we focus on filter processes with compact state spaces and show that under mild and verifiable assumptions, they admit a unique invariant measure and exhibit weak convergence of occupation measures.
\section{Notation and preliminaries}
Let $(\mathbb{X}, d)$ denote a compact metric space which is the state space of a partially observable Markov process. 
Let $\mathbb{B}(\mathbb{X})$ be its Borel $\sigma$-field, and denote by $\mathcal{Z}:=\mathbb{P}(\mathbb{X})$ the set of probability measures on $(\mathbb{X}, \mathbb{B}(\mathbb{X}))$ under the weak topology. Let $\mathbb{P}(\mathcal{Z})$ denote the set of probability measures on $\mathcal{Z}$, equipped with the weak topology. Let $\mathbb{C}(\mathbb{X})$ be the set of all continuous, bounded functions on $\mathbb{X}$. We let $\mathcal{T}$ be the transition kernel of the model which is a stochastic kernel from $\PP(\mathbb{X})$ to $\PP(\mathbb{X})$. Here and throughout the paper $\mathbb{N}$ denotes the set of positive integers.

Let $\mathbb{Y}$ be a Polish space denoting the observation space of the model, and let the state be observed through an observation channel $\Q$. $\Q(\cdot \mid x)$ is a probability measure on $\mathbb{Y}$ for every $x \in \mathbb{X}$, and $\Q(A \mid \cdot): \mathbb{X} \rightarrow[0,1]$ is a Borel measurable function for every $A \in \B(\mathbb{Y})$. 
The update rules of the system are determined by $\mathcal{T}$ and $\Q$:
$$
\operatorname{Pr}\left(\left(X_0, Y_0\right) \in B\right)=\int_B \mu\left(d x_0\right) \Q\left(d y_0 \mid x_0\right), \quad B \in \B(\mathbb{X} \times \mathbb{Y})
$$
where $\mu$ is the (prior) distribution of the initial state $X_0$, and
$$
\operatorname{Pr}\left(\left(X_n, Y_n\right) \in B \mid(X, Y)_{[0, n-1]}=(x, y)_{[0, n-1]}\right)=\int_B \mathcal{T}\left(d x_n \mid x_{n-1}\right) \Q\left(d y_n \mid x_n\right),
$$
$B \in \B(\mathbb{X} \times \mathbb{Y})$, $n \in \mathbb{N}$.
By standard realization theorems (e.g., \cite[Lemma F]{aumann1961mixed}), this process admits a functional representation:
\begin{eqnarray}\label{updateEq}
 X_{k+1} = H(X_k, W_k) \label{updateEq1}, \qquad
Y_{k} = G(X_k, V_k) \label{updateEq2} 
\end{eqnarray}
where $H, G$ are measurable functions and $W_k, V_{k}$ are mutually independent i.i.d. noise processes (taking values without loss in $[0,1]$) and $X_0 \sim \mu$.

A POMP (Partially Observable Markov Process) can be reduced to a fully observable Markov process by defining the filter process \cite{Rhe74, Yus76}:
$$
Z_n:=\operatorname{Pr}\left\{X_n \in \cdot \mid Y_0, \ldots, Y_n\right\}.
$$
with state space $\mathcal{Z}$. 
The transition kernel $\eta: \mathcal{Z} \rightarrow \mathbb{P}(\mathcal{Z})$  is defined as:
\begin{align}\label{etatra}
\eta(\cdot \mid z)&=\int_{\mathbb{Y}} \1_{\{F(z, y_{n+1}) \in \cdot\}}\operatorname{Pr}(y_{n+1}\in dy \mid z_n=z)\\
& =   \int \1_{\{F(z, y_{n+1}) \in \cdot\}}\Q(dy_{n+1}|x_{n+1}){\T}(dx_{n+1}|x_n)z(dx_n), \nonumber
\end{align}
where
\begin{equation}\label{F_def} 
    F(z, y)(\cdot):=\operatorname{Pr}\left\{X_{n+1} \in \cdot \mid Z_n=z, Y_{n+1}=y\right\}.
    \end{equation}

Hence, the filter process is a completely observable Markov process with the state space $\mathcal{Z}$ and transtion kernel $\eta$.

\begin{definition}\label{Dobrushincoefficient}[\cite{dobrushin1956central}, Equation 1.16.] For a kernel operator $K: S_1 \rightarrow \mathcal{P}\left(S_2\right)$, 
    the Dobrushin coefficient is defined by:
    \begin{equation}
\delta(K)=\inf \sum_{i=1}^n \min \{ K\left(x, A_i\right), K\left(y, A_i\right)\}
\end{equation}
where the infimum is over all $x, y \in S_1$ and all finite partitions $\left\{A_i\right\}_{i=1}^n$ of $S_2$.
\end{definition}

A sequence $\left\{\mu_{n}\right\}_{n\in\mathbb{N}} \subset \mathcal{Z}$ converges weakly to $\mu \in \mathcal{Z}$ if for every $f \in \mathbb{C}(\mathbb{X})$,
\begin{equation}
\int_{\mathbb{X}} f(x) \mu_{n}(d x) \rightarrow \int_{\mathbb{X}} f(x) \mu(d x) \quad \text { as } \quad n \rightarrow \infty .
\end{equation}
We note that $\mathcal{Z}$ is a Polish space under the weak convergence topology when
 $(\mathbb{X}, d)$ is a complete, separable and metric space \cite[ Chapter 2, section 6]{Par67}. 
One way to metrize weak convergence on $\mathcal{Z}$ is via the bounded Lipschitz (BL) metric \cite[p.109]{villani2008optimal}:
\begin{align}\label{boundedlip}
&\rho_{BL}(\mu, \nu) :=\sup \left\{\int_{\mathbb{X}} f(x) \mu(d x)-\int_{\mathbb{X}} f(x) \nu(d x) : f \in \operatorname{BL_1}(\mathbb{X}) \right\},
\end{align}
where
\begin{align*}
\operatorname{BL}_1(\mathbb{X}):=\left\{f:
\norm{f}_{\infty}+\norm{f}_{L}\leq 1 \right\}, \; \norm{f}_{\infty}=\sup_{x\in \mathbb{X}}|f(x)|,
\; \norm{f}_{L}=\sup_{x\neq y} \frac{|f(x)-f(y)|}{d(x,y)}.
\end{align*}
By, e.g., \cite{Bil99}, if $(\mathbb{X}, d)$ is compact, then
$\PP(\mathbb{X})$, equipped with $\rho_{BL}$, 
is also compact.

When $\mathbb{X}$ is compact, the Kantorovich-Rubinstein metric (equivalent to the Wasserstein metric of order $1$) (\cite{Bog07}, Theorem 8.3.2) provides another way to metrize the weak topology on $\mathcal{Z}$ \cite[Theorem 8.3.2]{Bog07}. It is defined as:
\begin{align}\label{defkappanorm}
&W_1(\mu, \nu):=\sup \left\{\int_{\mathbb{X}} f(x) \mu(d x)-\int_{\mathbb{X}} f(x) \nu(d x) :\norm{f}_{L}\leq 1\right\}.
\end{align}

\subsection{Literature Review}

\yed{The unique ergodicity problem of non-linear filter processes has received significant interest at least since the seminal paper by Blackwell \cite{blackwell1959entropy}. Kaijser \cite{Kaijser} studies unique ergodicity in the context of filtering processes for 
finite state and finite observation spaces, and this study involves the use of the Furstenberg-Kesten theory of random matrix products. Kochman and Reeds prove a more general result in \cite{kochman2006simple}, also for the finite space case; and assume that the closure of finite multiplications of transition matrices has a rank one element. 
A further study by Kaijser \cite{kaijser2011markov} extends the results of Kochman and Reeds to countable $\mathbb{X}$. Chigansky and Van Handel \cite{chigansky2010complete} obtain necessary and sufficient conditions for unique ergodicity of the filter 
in the case where $\mathbb{X}$ and $\mathbb{Y}$ both take values in a countable state space. Additionally, the condition presented in \cite{chigansky2010complete} is also sufficient when the observation space is $\mathbb{R}^d$. \cite{chigansky2010complete} notes also that when $\mathbb{X}$ is finite and $\mathbb{Y}$ is countable, the condition in \cite{kochman2006simple} is necessary and sufficient for unique ergodicity of the filter process. 

In particular, the equivalence of unique ergodicity and almost sure filter stability has been a powerful approach to establish unique ergodicity (see e.g. \cite[Theorem 3.1]{chigansky2010complete}): If $X_n$ is an ergodic process, then almost sure filter stability implies unique ergodicity under several conditions in both discrete and continuous-time settings 
\cite{Budhiraja,DiMasiStettner2005ergodicity,chigansky2010complete,Handel,le2004stability}. 

Since the filter process is Markovian with kernel (\ref{etatra}), another approach to unique ergodicity could be to view the filter as a measure valued Markov process, and develop the necessary tools and conditions. To this end, there are two challenges: (i) Unique ergodicity via irreducibility properties \cite{MeynBook} of Markov chains is not applicable to filter processes in general. (ii) Unique ergodicity via the existence of a reachable state together with the strong Feller property \cite{Hairer,da1996ergodicity} (which is generally not applicable to filter processes; see \cite{FeKaZg14,KSYWeakFellerSysCont}), or via equi-continuity \cite{MeynBook,komorowski2010ergodicity} (whose applicability has been an open problem for filter processes) is a more promising method. To this end, however, while weak Feller continuity properties of filters have been recently studied \cite{CrDo02,FeKaZg14,KSYWeakFellerSysCont,kara2020near}, stronger regularity is needed for such a reachability approach to be consequential. 

In our paper we develop the machinery to facilitate (ii) above. Specifically, we establish conditions ensuring uniform weak continuity of transition kernels across time stages which then imply unique ergodicity provided that a topologically reachable state exists, and then present explicit conditions that guarantee the existence of a topologically reachable filter state.}
 
\section{Main Results}

\subsection{Unique Ergodicity.}
We begin with continuity assumptions for the transition and observation kernels:
\begin{assumption}\label{weakQtvT}
    There exists $\alpha\in \mathbb{R}^+$ such that 
    $\left\|\mathcal{T}(\cdot | x)-\mathcal{T}\left(\cdot | x^{\prime}\right)\right\|_{T V} \leq \alpha \cdot d(x,x^{\prime})$
    for every $x,x' \in \mathbb{X}$. 
\end{assumption}
\begin{assumption}\label{weakTtvQ}
\begin{itemize}
    \item[(i)] There exists a constant $\theta \in (0,1)$ such that
$
W_1\left(\mathcal{T}(\cdot | x), \mathcal{T}\left(\cdot | x^{\prime}\right)\right) \leq \theta \cdot d\left(x, x^{\prime}\right)
$
for every $x, x^{\prime} \in \mathbb{X}$.
\item [(ii)] There exists a constant $\gamma \in \mathbb{R}^{+}$ such that
$
\left\|\mathcal{Q}(\cdot \mid x) - \mathcal{Q}\left(\cdot \mid x^{\prime}\right)\right\|_{TV} \leq \gamma \cdot d\left(x, x^{\prime}\right)
$
for every $x, x^{\prime} \in \mathbb{X}$.
\end{itemize}
\end{assumption}
Under either Assumption~\ref{weakQtvT} or~\ref{weakTtvQ}, the filtering kernel $\eta$ defined in~\eqref{etatra} satisfies the weak Feller property \cite{KSYWeakFellerSysCont}.
\begin{definition}
    Let $P$ be a transition kernel.
    A sate $x\in X$ is  
    (i) \emph{topologically reachable} if 
        for every y and every open neighborhood 
        $O$ of $x$, there exists $k\in \mathbb{N}$ such that $P^k(y, O)>0$,
    (ii) \emph{topologically aperiodic} 
    if  for every open neighborhood 
    $O$ of $x$ there exists $K\in \mathbb{N}$ such that 
     $P^k(x, O)>0$ for all $k \geq K$.   
\end{definition}

Our main result on unique ergodicity is as follows:
\begin{theorem}[\bf{Unique Ergodicity under Reachability}]\label{Main2} 
    Under either Assumption~\ref{weakQtvT} or~\ref{weakTtvQ},
    if the filter process has a topologically reachable state, then
    it is uniquely ergodic. 
    Moreover, if the reachable state
    is aperiodic, then for any initial distribution,
    the filter process
    converges weakly to the unique invariant measure.
    \end{theorem}
    For Markov chains which are not irreducible, 
    topologically reachable states play a crucial role, 
    not unlike small or petite sets \cite{MeynBook}  serving as recurrent 
    sets for the irreducible setting. Our analysis, 
    in Section \ref{existence_reachable}, establishes the existence 
    of a reachable state and its implications for unique 
    ergodicity and weak convergence.
    \begin{corollary}\label{Main2Cor}
        If $\mathbb{X}$ is finite and the filter process has a topologically reachable state, then it is uniquely ergodic.
    \end{corollary}

\subsection{Existence of a reachable state for the filter process}
\label{existence_reachable}
This section presents conditions guaranteeing  the existence of a reachable state for the filter process. 

\subsubsection{Conditions via Contraction Along a Fixed Measurement}
First, we present a condition on mixing properties of the transition kernel. 
\begin{definition}
    Two non-negative measures $\mu, \mu^{\prime}$ on $(\mathbb{X},\B(\mathbb{X}))$ are comparable, if there exist positive constants $0<a \leq b$, such that
    $
    a \mu^{\prime}(A) \leq \mu(A) \leq b \mu^{\prime}(A)
    $
    for any Borel subset $A \subset \mathbb{X}$.
\end{definition}

\begin{definition}[Mixing kernel]
    The non-negative kernel $K$ defined on $\mathbb{X}$ is mixing, if there exists a constant $0<\varepsilon \leq 1$, and a non-negative measure $\lambda$ on $\mathbb{X}$, such that
    $
    \varepsilon \lambda(A) \leq K(x, A) \leq \frac{1}{\varepsilon} \lambda(A)
    $
    for any $x \in \mathbb{X}$, and any Borel subset $A \subset \mathbb{X}$.
    \end{definition}

\begin{definition}(Hilbert metric).  Let $\mu, \nu$ be two non-negative finite measures. We define the Hilbert metric on such measures as
    \begin{equation}
    h(\mu, \nu)= \begin{cases}\log \left(\frac{\sup _{A \mid \nu(A)>0} \frac{\mu(A)}{\nu(A)}}{\inf _{A \mid \nu(A)>0} \frac{\mu(A)}{\nu(A)}}\right) & \text { if } \mu, \nu \text { are comparable } \\ 0 & \text { if } \mu=\nu=0 \\ \infty & \text { else }\end{cases}
    \end{equation}
\end{definition}

Note that $h(a\mu, b\nu) = h(\mu, \nu)$ for any positive scalars $a, b$. Therefore, the Hilbert metric is a useful metric for nonlinear filters, and the following lemma demonstrates that it bounds the total-variation distance.

\begin{lemma}[\cite{le2004stability}, Lemma 3.4.]\label{h-TV}
    Let $\mu, \mu^{\prime}$ be two non-negative finite measures, and let 
$\bar{\mu}$ and $\bar{\mu}^{\prime}$ denote their normalized versions (i.e., probability measures obtained by normalization).  
    \begin{enumerate}
    \item[i.] $
    \bigl\|\bar{\mu} - \bar{\mu}^{\prime}\bigr\|_{TV} \;\leq\; \frac{2}{\log 3}\, h(\mu,\mu^{\prime}).
$
    \item[ii.] If the nonnegative kernel $K$ is mixing kernel with constant $\epsilon$, then 
    $
  h(K\mu, K\mu^{\prime}) \;\leq\; \frac{1}{\varepsilon^2}\, 
\bigl\|\bar{\mu} - \bar{\mu}^{\prime}\bigr\|_{TV}.
  $
    \end{enumerate}
\end{lemma}

\begin{assumption} \label{mixing_kernel_con} 
    \begin{enumerate}
    \item[i.] $\mathbb{Y}$ is countable and 
    there exists an observation realization 
    $y' \in \mathbb{Y}$ such that for some 
    positive $\epsilon$, $\Q(y'|x)>\epsilon$ 
    for every $x\in \mathbb{X}$. 
    \item[ii.] The transition kernel ${\T}$ is mixing kernel.
   \end{enumerate}
\end{assumption}

\begin{lemma}[\cite{le2004stability}, Lemma 3.8]\label{Birkoff} 
    (Birkhoff contraction coefficient). 
    The nonnegative linear operator on $\mathcal{M}^{+}(\mathbb{X})$ (positive measures on $\mathbb{X}$) 
    associated with a nonnegative kernel $K$ defined on $\mathbb{X}$, is a contraction under the Hilbert metric, and
    $$
    \tau(K):=\sup _{0<h\left(\mu, \mu^{\prime}\right)<\infty} \frac{h\left(K \mu, K \mu^{\prime}\right)}{h\left(\mu, \mu^{\prime}\right)}=\tanh \left[\frac{1}{4} H(K)\right]
    $$
    where
    $$
    H(K):=\sup _{\mu, \mu^{\prime}} h\left(K \mu, K \mu^{\prime}\right)
    $$
    is over nonnegative measures. $\tau(K)$ is called the Birkhoff contraction coefficient.
    Notice that $H(K)<\infty$ implies $\tau(K)<1$.
\end{lemma}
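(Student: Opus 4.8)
The plan is to prove the two assertions in sequence, working throughout with the order structure of the cone $\mathcal{M}^{+}(\mathbb{X})$ and the scale-invariance $h(a\mu, b\nu) = h(\mu,\nu)$ recorded before the lemma. First I would establish the crude bound $\tau(K)\le 1$ from positivity alone. If $\mu,\mu'$ are comparable, set $a = \inf_{A}\mu(A)/\mu'(A)$ and $b = \sup_{A}\mu(A)/\mu'(A)$, so that $a\mu' \le \mu \le b\mu'$ setwise and $h(\mu,\mu')=\log(b/a)$. Because $K$ is linear and nonnegative it preserves this ordering, giving $a\,K\mu' \le K\mu \le b\,K\mu'$, hence $h(K\mu,K\mu') \le \log(b/a) = h(\mu,\mu')$. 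This already shows $K$ is nonexpansive under $h$; the real content is the sharp ratio.

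For the exact coefficient I would reduce an arbitrary comparable pair to a two-point configuration. With the same $a,b$, put $w_1 = \mu - a\mu' \ge 0$ and $w_2 = b\mu' - \mu \ge 0$; then $\mu' = (w_1+w_2)/(b-a)$ and $\mu = (b\,w_1 + a\,w_2)/(b-a)$, so by linearity and scale-invariance
\[ h(K\mu, K\mu') = h(b\,Kw_1 + a\,Kw_2,\; Kw_1 + Kw_2). \]
Writing $p = Kw_1$, $q = Kw_2$ and $t(A) = q(A)/(p+q)(A)$, the pointwise ratio of $bp+aq$ against $p+q$ equals $b - (b-a)t(A)$, whence
\[ h(K\mu,K\mu') = \log\frac{\,b-(b-a)t_{\min}\,}{\,b-(b-a)t_{\max}\,}, \]
a quantity depending only on $b/a$ and on $D := h(p,q) \le H(K)$. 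The contraction ratio $h(K\mu,K\mu')/h(\mu,\mu')$ is thereby reduced to an explicit two-variable problem.

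The final step is the optimization. Parametrizing the extremal values of $p(A)/q(A)$ by $\rho = r_{\min}$ (so $r_{\max} = \rho e^{D}$, i.e. $t_{\min} = 1/(1+\rho e^{D})$, $t_{\max} = 1/(1+\rho)$) and letting $b/a \to 1^{+}$, the ratio tends to $g(\rho) = \frac{\rho e^{D}}{1+\rho e^{D}} - \frac{\rho}{1+\rho}$. Differentiating, $g$ is maximized at $\rho^{*} = e^{-D/2}$, with value $\frac{e^{D/2}-1}{e^{D/2}+1} = \tanh(D/4)$. Taking the supremum over admissible $D\le H(K)$ and using that $\tanh$ is increasing yields $\tau(K) = \tanh[H(K)/4]$; the concluding remark is then immediate, since $\tanh$ is strictly below $1$ for any finite argument, so $H(K)<\infty$ forces $\tau(K)<1$.

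I expect the main obstacles to be the two technical points the explicit computation glosses over. First, one must justify that the worst case of the contraction ratio is the infinitesimal limit $b/a \to 1^{+}$ rather than a finite separation; this requires a monotonicity (convexity) argument showing $h(K\mu,K\mu')/h(\mu,\mu')$ decreases in $h(\mu,\mu')$, so that the supremum defining $\tau(K)$ is the limiting value computed above. Second, because $\mathbb{X}$ carries a general, infinite-dimensional measure cone rather than a finite simplex, the suprema and infima over Borel sets $A$ in the definition of $h$, and the attainment of the projective diameter $H(K)$, need care: comparability of the relevant measures together with an approximation argument is needed so that a near-extremal pair $w_1,w_2$ realizing $D$ close to $H(K)$ can be constructed. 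This same construction also supplies the lower bound $\tau(K) \ge \tanh[H(K)/4]$ that gives equality rather than a mere inequality.
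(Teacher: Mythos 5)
First, a point of context: the paper contains no proof of this lemma at all. It is quoted verbatim from \cite{le2004stability} (Lemma 3.8 there) and is, in substance, Birkhoff's classical contraction theorem, imported as a black box. So there is no internal argument to compare yours against; the expected treatment is the citation. Judged on its own merits, your sketch has the right skeleton, and its core computation is correct: nonexpansiveness from order preservation, the decomposition $\mu=(b w_1+a w_2)/(b-a)$, $\mu'=(w_1+w_2)/(b-a)$, the resulting exact formula for $h(K\mu,K\mu')$, and the one-variable optimization $\sup_{\rho>0}g(\rho)=\tanh(D/4)$ at $\rho^*=e^{-D/2}$ are all right.

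The genuine gap is precisely the step you defer to your first ``obstacle,'' and the mechanism you propose for it --- that $h(K\mu,K\mu')/h(\mu,\mu')$ decreases in $h(\mu,\mu')$ --- is false. Writing $s=e^{h(\mu,\mu')}$ and letting the density ratio of $Kw_1$ against $Kw_2$ range over $[\rho,\rho e^{D}]$, your own formula reads
\[
\frac{h(K\mu,K\mu')}{h(\mu,\mu')}
=\frac{1}{\log s}\,\log\frac{(s\rho e^{D}+1)(1+\rho)}{(s\rho+1)(1+\rho e^{D})},
\]
and for $D=2$, $\rho=e^{-2}$ this equals $\approx 0.381$ in the limit $s\to 1^{+}$ but $\approx 0.441$ at $\log s=1.5$: whenever $\rho<e^{-D/2}$ the ratio for a fixed direction \emph{increases} with separation before decreasing (it stays below $\tanh(D/4)\approx 0.462$, which is the theorem, but that is exactly what remains to be proved). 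So the supremum defining $\tau(K)$ cannot be reduced to the infinitesimal limit direction-by-direction; one must handle the joint supremum over $(s,\rho)$, which is the substantive half of Birkhoff's theorem. A clean way to finish along your lines: for fixed $s$ the displayed quantity is invariant under the involution $\rho\mapsto 1/(s\rho e^{D})$, so the inner supremum over $\rho$ is attained at its fixed point $\rho=e^{-D/2}/\sqrt{s}$; substituting and setting $u=\sqrt{s}$, $v=e^{D/2}$, the optimized ratio becomes $\log\bigl(\tfrac{uv+1}{u+v}\bigr)\big/\log u$, and the required bound $\log\tfrac{uv+1}{u+v}\le\tfrac{v-1}{v+1}\log u$ follows by comparing derivatives in $u$, which after simplification reduces to $v(u-1)^{2}\ge 0$; letting $u\to 1^{+}$ with $v$ fixed gives equality in the limit, hence $\tau(K)=\tanh[H(K)/4]$. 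Your second flagged issue is also real but easier than you suggest: by quasi-convexity of $h$ under mixtures, $H(K)$ is approached on pairs of point masses, which are mutually singular, and these (after the rescaling $w_1\mapsto c\,w_1$ that places $\rho$ at its optimum) realize the lower bound. As it stands, though, your proposal establishes only the infinitesimal contraction coefficient, not the stated global one.
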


Recall from equation (\ref{F_def}) that $
F(z, y)(\cdot)=\operatorname{Pr}\left\{X_{n+1} \in \cdot \mid Z_n=z, Y_{n+1}=y\right\}
$.
        \begin{lemma}\label{clm}
    Under Assumption \ref{mixing_kernel_con}, 
    there exists a constant $c<1$  such that 
    \begin{align}
        h(F(\mu, y'), F(\nu, y'))\leq c h(\mu, \nu)
    \end{align}
    for every comparable $\mu,\nu\in \Z$.
    \end{lemma}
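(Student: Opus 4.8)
The plan is to factor the one-step filter update $F(\cdot,y')$ into three operations on nonnegative measures—prediction through $\mathcal{T}$, multiplication by the observation likelihood, and renormalization—and to observe that only the prediction step genuinely contracts the Hilbert metric, while the other two leave it invariant. Writing $g(\cdot):=\Q(y'\mid\cdot)$ and letting $\mathcal{T}$ also denote the prediction operator $(\mathcal{T}\mu)(\cdot)=\int_{\mathbb{X}}\mathcal{T}(\cdot\mid x)\,\mu(dx)$ on $\mathcal{M}^{+}(\mathbb{X})$ (consistent with the notation $\mathcal{T}(dx_1\mid z_0)$ used earlier), the filter recursion gives
$$F(\mu,y')(\cdot)=\frac{g(\cdot)\,(\mathcal{T}\mu)(\cdot)}{\int_{\mathbb{X}}g\,d(\mathcal{T}\mu)},$$
and likewise for $\nu$. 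Assumption \ref{mixing_kernel_con}-i ensures $\int_{\mathbb{X}}g\,d(\mathcal{T}\mu)\geq\epsilon>0$, so the update is well defined, and $g$ is strictly positive.

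First I would discard the normalizing constants using the scaling invariance $h(a\mu,b\nu)=h(\mu,\nu)$ recorded after the definition of the Hilbert metric, which yields $h\big(F(\mu,y'),F(\nu,y')\big)=h\big(g\cdot\mathcal{T}\mu,\;g\cdot\mathcal{T}\nu\big)$. Next I would establish that multiplication by the fixed strictly positive function $g$ is an isometry of the Hilbert metric, i.e. $h(g\alpha,g\beta)=h(\alpha,\beta)$ for comparable $\alpha,\beta$; this is the step I expect to require the most care. For comparable measures the Radon–Nikodym derivative $\phi=d\alpha/d\beta$ exists and is bounded away from $0$ and $\infty$, and since $g>0$ one has $d(g\alpha)/d(g\beta)=\phi$ ($\beta$-a.e., equivalently $g\beta$-a.e.). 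Because $\alpha(A)/\beta(A)=\big(\int_A\phi\,d\beta\big)/\beta(A)$ is a $\beta$-weighted average of $\phi$, taking sup and inf over $A$ gives $\sup_A\alpha(A)/\beta(A)=\operatorname{ess\,sup}_\beta\phi$ and $\inf_A\alpha(A)/\beta(A)=\operatorname{ess\,inf}_\beta\phi$; the identical computation for $g\alpha,g\beta$ produces the same two quantities, so the two Hilbert distances coincide. Hence $h\big(F(\mu,y'),F(\nu,y')\big)=h(\mathcal{T}\mu,\mathcal{T}\nu)$.

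It then remains to contract the prediction step, which I would handle via the Birkhoff contraction coefficient of Lemma \ref{Birkoff}. The mixing hypothesis on $\mathcal{T}$ (Assumption \ref{mixing_kernel_con}-ii) forces $H(\mathcal{T})<\infty$: the two-sided bound $\varepsilon\lambda(A)\leq\mathcal{T}(x,A)\leq\varepsilon^{-1}\lambda(A)$ gives, for probability measures $\mu,\nu$, the ratio bound $(\mathcal{T}\mu)(A)/(\mathcal{T}\nu)(A)\in[\varepsilon^{2},\varepsilon^{-2}]$ for every $A$, so $H(\mathcal{T})\leq 4\log(1/\varepsilon)<\infty$ (and in particular $\mathcal{T}\mu,\mathcal{T}\nu$ are comparable, so all the Hilbert distances above are finite). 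Lemma \ref{Birkoff} then yields $\tau(\mathcal{T})=\tanh\!\big(\tfrac14 H(\mathcal{T})\big)<1$ and $h(\mathcal{T}\mu,\mathcal{T}\nu)\leq\tau(\mathcal{T})\,h(\mu,\nu)$. Combining the three displayed relations and setting $c:=\tau(\mathcal{T})<1$ completes the argument.
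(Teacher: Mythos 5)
Your proof is correct, and it shares the paper's overall skeleton---use the scale invariance $h(a\mu,b\nu)=h(\mu,\nu)$ to discard the Bayes normalization, then conclude with the Birkhoff contraction coefficient of Lemma \ref{Birkoff}---but your decomposition is genuinely different from the paper's. The paper treats prediction and likelihood as a \emph{single} unnormalized kernel $K\rho(dx')=\int\rho(dx)\,\Q(y'\mid x')\,\T(dx'\mid x)$ and applies Birkhoff to $K$, bounding $H(K)\le \log\bigl(1/(\epsilon_{\T}^4\epsilon^2)\bigr)$ with both the mixing constant $\epsilon_{\T}$ and the observation lower bound $\epsilon$, which yields $c=(1-\epsilon_{\T}^2\epsilon)/(1+\epsilon_{\T}^2\epsilon)$. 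You instead split off the multiplication by $g=\Q(y'\mid\cdot)$ and prove it is a Hilbert \emph{isometry}; your essential-supremum argument for this is sound (since $g>0$, the measures $g\beta$ and $\beta$ have the same null sets, so the weighted averages $(g\alpha)(A)/(g\beta)(A)$ and $\alpha(A)/\beta(A)$ have the same essential extrema, namely those of $\phi=d\alpha/d\beta$), and then you apply Birkhoff only to the prediction operator, with $H(\T)\le 4\log(1/\epsilon_{\T})$. This buys two things: a sharper constant $c=(1-\epsilon_{\T}^2)/(1+\epsilon_{\T}^2)\le(1-\epsilon_{\T}^2\epsilon)/(1+\epsilon_{\T}^2\epsilon)$, and the structural observation that the lower bound $\epsilon$ on $\Q(y'\mid\cdot)$ contributes nothing to the contraction rate---it is only needed so the update is well defined (and, downstream in Lemma \ref{Cauchy}, for reachability); the paper's route is shorter because it needs no isometry lemma. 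One cosmetic point: $H(\T)$ in Lemma \ref{Birkoff} is a supremum over all nonnegative measures, while you computed the ratio bound $[\epsilon_{\T}^2,\epsilon_{\T}^{-2}]$ only for probability measures; this is harmless by scale invariance of $h$ (equivalently, note as the paper does that the masses $\mu(\X)$, $\nu(\X)$ cancel in the ratio of the supremum to the infimum), but you should say so explicitly.
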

    \begin{proof}
    For the fixed observation $y'$ 
    given in Assumption \ref{h-TV},
    define the nonnegative kernel 
    $K: \mathcal{M}^{+}(\mathbb{X}) \to \mathcal{M}^{+}(\mathbb{X})$ 
    by
    $$
    K \rho\left(d x^{\prime}\right)=\int_\mathbb{X}\rho(dx) \Q(y'|x')\T(dx'|x),
    $$
    for any nonnegative measure $\rho \in \mathcal{M}^{+}(\mathbb{X})$.
    Define $$N^\mu_{y'}:=\int_\X\int_\X \Q(y'|\Bar{x})\T(d\Bar{x}|x)\mu(dx).$$
    $N^\mu_{y'}, N^\nu_{y'}>0$ because of Assumption \ref{mixing_kernel_con}. 
    
    \begin{align*}
    &P^{\mu}\left(X_1\in A \mid Y_{1}=y'\right)=\frac{\int_\X\int_A \Q(y'|x')\T(dx'|x)\mu(dx)}{N^\mu_{y'}}=\frac{(K\mu)(A)}{N^\mu_{y'}}.
    \end{align*}
    Therefore,
    \begin{align}
        h(F(\mu, y'), F(\nu, y'))=h(P^{\mu}\left(X_1\in \cdot \mid Y_{1}=y'\right),P^{\nu}\left(X_1\in \cdot \mid Y_{1}=y'\right))=h(K\mu, K\nu)
    \end{align}
    By Lemma \ref{Birkoff}
    $$
    \tau(K)=\sup _{0<h\left(\mu, \nu\right)<\infty} \frac{h\left(K \mu, K \nu\right)}{h\left(\mu, \nu\right)}<1
    $$
    only if $H(K)=\sup _{\mu, \mu^{\prime}} h\left(K \mu, K\mu^{\prime}\right)<\infty$. 
    Given that $\Q(y'|x) > \epsilon$ for every $x \in \mathbb{X}$, it follows that
    \begin{align}\label{K_T}
    \int_{\X} {\T}(A|x)\mu(dx)\geq (K\mu)(A) \geq \epsilon \int_{\X}  {\T}(A|x)\mu(dx)
    \end{align}
    for every $A\in \B(\mathbb{X})$.
    
    Let $\epsilon_{\T}$ be constant of the
    mixing kernel $\T$, i.e., 
    there exists a non-negative measure $\lambda$ on $\mathbb{X}$, such that
    $$
    \epsilon_{\T} \lambda(A) \leq \T(A|x) \leq \frac{1}{\epsilon_{\T}} \lambda(A)
    $$
    for any $x \in \mathbb{X}$, and any Borel subset $A \subset \mathbb{X}$.
    Thus, from equation (\ref{K_T}), we obtain:
    \begin{align}
        \frac{\mu(\X)}{\epsilon_{\T}} \lambda(A) \geq (K\mu)(A) \geq \epsilon \mu(\X) \epsilon_{\T}  \lambda(A)
    \end{align}
    for every $A\in \B(\mathbb{X})$.

    Hence, for any two finite positive measures 
$\mu, \nu$, we obtain:
    \begin{align}\label{mu_nu}
        \frac{\mu(\X)}{\nu(\X)\epsilon_{\T}^2 \epsilon} 
        \geq \frac{(K\mu)(A)}{(K\nu)(A)} \geq 
        \frac{\mu(\X)\epsilon_{\T}^2 \epsilon}{\nu(\X)}.
    \end{align}

    Lemma \ref{Birkoff} and equation (\ref{mu_nu})
    conclude the proof:
\begin{align*}
    \tau(K)&=\sup _{0<h\left(\mu, \nu\right)<\infty} \frac{h\left(K \mu, K \nu\right)}{h\left(\mu, \nu\right)}
    =\tanh \left[\frac{1}{4} H(K)\right]
    =\tanh \left[\frac{1}{4} \sup _{\mu, \nu} h\left(K \mu, K \nu\right)\right]
    \\ & \leq \tanh \left[\frac{1}{4} \log \left( \frac{1}{\epsilon_{\T}^4 \epsilon^2} \right)\right]
     =\frac{1-\epsilon_{\T}^2 \epsilon}{1+\epsilon_{\T}^2 \epsilon}<1.
\end{align*}
\end{proof}
\begin{lemma}\label{Cauchy}
    Under Assumption \ref{mixing_kernel_con},
    there exists an aperiodic reachable state 
    for filter process.
\end{lemma}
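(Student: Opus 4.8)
The plan is to exhibit the reachable, aperiodic state explicitly as the fixed point of the ``always observe $y'$'' filter map. Write $T_{y'}(z):=F(z,y')$ for the one-step filter update under the fixed observation $y'$ from Assumption \ref{mixing_kernel_con}. First I would record two elementary consequences of the assumption. On one hand, since $\Q(y'|x)>\epsilon$ for all $x$, the predictive probability of seeing $y'$ is bounded below: for any $z\in\Z$,
$$\Pr(Y_1=y' \mid Z_0=z)=\int_\X\int_\X \Q(y'|x_1)\,\T(dx_1|x_0)\,z(dx_0)\geq \epsilon.$$
On the other hand, the mixing estimates already used in Lemma \ref{clm} (inequality (\ref{K_T}) together with the mixing bounds on $\T$) show that $T_{y'}(z)$ is comparable to the reference measure $\lambda$ for \emph{every} $z$, with comparison constants independent of $z$; hence $h(T_{y'}(\mu),T_{y'}(\nu))\leq H(K)<\infty$ for all $\mu,\nu\in\Z$.

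Next I would construct the fixed point. Fix any $z_0\in\Z$ and set $z_{n+1}=T_{y'}(z_n)$. By the previous paragraph $z_1$ is comparable to $\lambda$, so $h(z_2,z_1)\leq H(K)<\infty$, and Lemma \ref{clm} yields the geometric bound $h(z_{n+1},z_n)\leq c^{\,n-1}h(z_2,z_1)$. By Lemma \ref{h-TV}(i) the sequence is therefore Cauchy in total variation, and since $\PP(\X)$ is complete under $\norm{\cdot}_{TV}$ it converges in total variation to some $z^*\in\Z$. Because $T_{y'}$ is continuous in total variation --- it is the normalization $T_{y'}(z)=Kz/(Kz)(\X)$ of the bounded linear map $K$, whose normalizing constant $(Kz)(\X)$ is bounded below by $\epsilon>0$ uniformly in $z$ --- passing to the limit in $z_{n+1}=T_{y'}(z_n)$ gives $T_{y'}(z^*)=z^*$. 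The same contraction estimate shows $z^*$ is independent of the starting point: for any other $\tilde z_0$ we have $h(T_{y'}^n(z_0),T_{y'}^n(\tilde z_0))\leq c^{\,n-1}H(K)\to 0$, so $T_{y'}^n(\tilde z_0)\to z^*$ in total variation for every initial filter state.

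Then I would verify reachability and aperiodicity directly from the dynamics. Along the event $\{Y_1=\cdots=Y_n=y'\}$ the filter recursion $Z_{k+1}=F(Z_k,Y_{k+1})$ evolves deterministically as $Z_n=T_{y'}^n(Z_0)$, and by the chain rule combined with the uniform lower bound above this event has probability at least $\epsilon^n>0$ from any $Z_0=z$. Given an arbitrary $z\in\Z$ and a weak-open neighborhood $O$ of $z^*$, total-variation convergence $T_{y'}^n(z)\to z^*$ implies weak convergence, so $T_{y'}^n(z)\in O$ for all large $n$; consequently $\eta^n(z,O)\geq\epsilon^n>0$, proving $z^*$ is topologically reachable. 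For aperiodicity, since $T_{y'}(z^*)=z^*$ we have $T_{y'}^k(z^*)=z^*\in O$ for every $k$, whence $\eta^k(z^*,O)\geq\epsilon^k>0$ for all $k\in\mathbb{N}$, and in particular for all sufficiently large $k$, so $z^*$ is topologically aperiodic.

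The main obstacle is the fixed-point construction in the middle step: one must move into the class of measures comparable to $\lambda$ after a single update (so that the Hilbert metric is finite and Lemma \ref{clm} applies), transfer the Hilbert-metric Cauchy property to total variation via Lemma \ref{h-TV}(i), and then justify both the completeness of $\PP(\X)$ under $\norm{\cdot}_{TV}$ and the total-variation continuity of the normalized map $T_{y'}$ so that the limit is a genuine fixed point. Once $z^*$ is in hand, the reachability and aperiodicity conclusions are comparatively routine, relying only on the uniform lower bound $\epsilon^n$ for observing $y'$ repeatedly and on the deterministic nature of the filter recursion along that observation string.
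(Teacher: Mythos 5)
Your proof is correct and follows essentially the same route as the paper: iterate the filter under the constant observation string $y'$, invoke Lemma \ref{clm} together with Lemma \ref{h-TV} to get a sequence that is Cauchy in total variation with limit independent of the initial condition, and use the uniform lower bound $\Q(y'|\cdot)>\epsilon$ on the probability of the observation string to deduce reachability. The only minor difference is the aperiodicity step: you prove that $z^*$ is an exact fixed point of $T_{y'}$ via total-variation continuity of the normalized update, which gives $\eta^k(z^*,O)>0$ for every $k$, whereas the paper reaches the (slightly weaker, but sufficient) conclusion for all large $k$ by applying its convergence result with initial measure $\pi^*$ itself.
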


\begin{proof}
Define 
$\pi_n^{\mu *}(\cdot)=P^{\mu}\left(X_n \in \cdot \mid Y_{[0, n]}=(y',\dots, y')\right), n \in \mathbb{N}$ 
where $P^{\mu}$ is the probability measure induced by the prior $\mu$.
First we prove that $\pi_n^{\mu *}$ is 
a Cauchy sequence on $\Z$ under total-variation metric.
Under Assumption~\ref{mixing_kernel_con}, the conditional measures 
$\pi_{n+1}^{\mu *}$ and $\pi_{n}^{\mu *}$ are comparable for every $n \geq 1$.  
Indeed, Assumption~\ref{mixing_kernel_con}(ii) states that the transition kernel ${\T}$ is a mixing kernel; that is, there exist $\varepsilon_\T > 0$ and a nonnegative measure $\lambda$ such that 
\(
    \varepsilon_\T \lambda(A) \;\leq\; \T(A\mid x) \;\leq\; \frac{1}{\varepsilon_\T}\lambda(A),
    \;\; \forall x \in \mathbb{X}, \;\; \forall A \in \mathcal{B}(\mathbb{X}).
\)
Furthermore, Assumption~\ref{mixing_kernel_con}(i) guarantees that the likelihood of observing $y'$ is uniformly bounded away from zero across all states, i.e., $\Q(y' \mid x) \geq \epsilon > 0$ for every $x \in \mathbb{X}$.  

Combining these two properties, we obtain for every $n \geq 1$,
\[
   \varepsilon_\T \epsilon \,\lambda(A) \;\leq\; \pi_n^{\mu *}(A) \;\leq\; \frac{1}{\varepsilon_\T \epsilon}\,\lambda(A),
   \qquad \forall A \in \mathcal{B}(\mathbb{X}).
\]
Hence, $\pi_{n+1}^{\mu *}$ and $\pi_n^{\mu *}$ are mutually absolutely continuous, with Radon–Nikodym derivatives bounded above and below by positive constants depending only on $\varepsilon_\T$ and $\epsilon$. In other words, they are comparable.  

Therefore, by Lemma~\ref{clm}, there exists some $c<1$ such that
\[
h\!\left(\pi_{n+2}^{\mu *}, \pi_{n+1}^{\mu *}\right) \;\leq\; c \, h\!\left(\pi_{n+1}^{\mu *}, \pi_{n}^{\mu *}\right).
\]
Thus, the sequence is a Cauchy sequence under the Hilbert metric, and consequently under the total-variation metric.
Since $\Z$ is a complete metric space, 
there exists a probability measure $\pi^{\mu *}\in \mathcal{Z}$ which is the limit of the Cauchy sequence $(\pi_n^{\mu *})_{n\in N}$. 
Now we will show that the limit is
independent of the initial measure $\mu$.
For any two initial measures 
$\mu$ and $\nu$, 
$F(\mu, y')$ and $F(\nu, y')$ are comparable
since $K$ is a mixing kernel. Therefore, 
$
h\left(\pi_{n+1}^{\mu *}, \pi_{n+1}^{\nu *}\right) \leq c h\left(\pi_{n}^{\mu *}, \pi_{n}^{\nu *}\right)
$ for $n\geq 1$. Consequently, the limits of 
both sequences are the same. Thus, 
the limit point is independent of the initial measure. 
Let $\pi^*$ be the limit point.

For any open neighborhood $O$ of $\pi^{*}$, 
there exist $\kappa>0$
such that $O \supset B_\kappa(\pi^{*}):=
\{\mu\in \mathcal{Z}: \rho_{BL}(\mu,\pi^{*})<\kappa\}$.
Under Assumption \ref{mixing_kernel_con},
we know that $P^{\mu}(Y_1=y')>\epsilon$ 
for any $\mu$.
Since convergence in total variation implies
convergence in the bounded Lipschitz metric,
there exists a sufficiently large $M$ such that,
for any $n>M$,
$\pi_n^{\mu *}(\cdot)\in O$ and 
$P^{\mu}(Y_{[0, n]}=(y',\dots, y'))>0$
This is true for any initial prior $\mu$,
so $\pi^{*}$ is topologically reachable.
It is also true for $\mu=\pi^{*}$, 
meaning $\pi^{*}$ is topologically aperiodic.
Thus, $\pi ^*$ is a topologically 
reachable aperiodic state. 
\end{proof}

As a corollary, we state the following.

\begin{theorem}[\bf{Unique Ergodicity of the Filter Process}]\label{Main21} 
    Under either Assumption~\ref{weakQtvT} or~\ref{weakTtvQ}, and Assumption \ref{mixing_kernel_con}, the filter process is uniquely ergodic.
    Moreover, for any initial distribution,
    the filter process
    converges weakly to the unique invariant measure.
\end{theorem}

\yed{\begin{remark}
In the analysis above, we do not impose filter stability, and accordingly results that rely on the equivalence of filter stability and unique ergodicity under certain technical conditions (e.g.,  \cite[Theorem~3.1]{chigansky2010complete}) do not apply to our special case. Notably, \cite[Theorem~3.1]{chigansky2010complete} establishes equivalence between almost sure filter stability and unique ergodicity under conditions where the stability requirement is stronger than what is possible under our conditions (e.g. the one obtained in Theorem~2.1 of \cite{chigansky2009intrinsic}, which assumes uniform absolute continuity with bounds on the Radon–Nikodym derivatives between initial conditions). We also note that, Hilbert metric-based filter stability results (e.g., \cite{le2004stability}) require strong mixing properties for both the transition kernel ${\T}$ and the observation kernel $\Q$, rather than only on ${\T}$. By contrast, our conditions here require contraction only under specific measurement realizations for a fixed observation $\bar{y}$, repeated sequentially, rather than uniform contraction across all observations. Thus, we do not impose general filter stability, but only a contraction condition when a particular measurement is realized. Consequently, Assumption \ref{mixing_kernel_con} alone does not suffice to ensure the mixing kernel condition in \cite{le2004stability}.  
\end{remark}}

\subsubsection{Conditions under two Cases of Measurement Realizations without Transition Kernel Mixing: Informative or Non-Informative Measurements}

For some applications, the existence of a reachable state follows from more direct arguments for two special cases: (i) when a non-informative measurement takes place with positive probability, or (i) when there exists a state which is uniquely recoverable from a measurement. We start with the former:

\begin{assumption} \label{positive_eq} 
    The transition kernel ${\T}$ is uniformly ergodic with
    invariant probability measure $\pi^{*}$; that is, 
    \[
\sup_{\mu} \big\| P^{\mu}(X_n \in \cdot) - \pi^{*} \big\|_{TV} \;\xrightarrow[n \to \infty]{}\; 0,
\]
where $P^{\mu}$ denotes the law of the Markov chain with initial distribution $\mu$.
 Furthermore, there exists an observation realization 
    $\bar{y} \in \mathbb{Y}$ such that for some 
    positive $\epsilon$, $\Q(\bar{y}|x)=\epsilon$ 
    for every $x\in \mathbb{X}$. 
\end{assumption}

\begin{definition}
    \begin{align}
        \pi_n^{\mu *}(\cdot)=P^{\mu}\left(X_n \in \cdot \mid Y_{[0, n]}=(\bar{y},\dots, \bar{y})\right), n \in \mathbb{N}
    \end{align} where $P^{\mu}$ is the probability measure induced by the prior $\mu$.
\end{definition}

\begin{lemma}\label{reachability}
Under Assumption \ref{positive_eq}, 
$\pi_n^{\mu *}(\cdot)$ converges to the unique 
invariant probability
measure $\pi^{*}$
for any initial measure $\mu \in \Z$ under total variation. 
Furthermore,
the limit measure $\pi^{*}$ is a 
topologically aperiodic reachable state for the filter process.
\end{lemma}
\begin{proof}
\begin{align*}
    & \pi_n^{\mu *}(A)=P^{\mu}\left(X_n \in A \mid Y_{[0, n]}=(\bar{y},\dots, \bar{y})\right)
    \nonumber \\ & = \frac{P^{\mu}\left(Y_n = \bar{y} \mid X_n \in A,  Y_{[0, n-1]}=(\bar{y},\dots, \bar{y})\right) P^{\mu}\left(X_n \in A \mid Y_{[0, n-1]}=(\bar{y},\dots, \bar{y})\right)}
    {P^{\mu}\left(Y_n = \bar{y} \mid Y_{[0, n-1]}=(\bar{y},\dots, \bar{y})\right)}
    \nonumber \\ & =\frac{P^{\mu}\left(Y_n = \bar{y} \mid X_n \in A,  Y_{[0, n-1]}=(\bar{y},\dots, \bar{y})\right) P^{\mu}\left(X_n \in A \mid Y_{[0, n-1]}=(\bar{y},\dots, \bar{y})\right)}
    {\int_{x_n \in \mathbb{X}} P^{\mu}\left(Y_n = \bar{y} \mid X_n=x_n,  Y_{[0, n-1]}=(\bar{y},\dots, \bar{y})\right)P^{\mu}\left(X_n \in d x_n \mid Y_{[0, n-1]}=(\bar{y},\dots, \bar{y})\right) }
    \nonumber \\ & = \frac{\epsilon P^{\mu}\left(X_n \in A \mid Y_{[0, n-1]}=(\bar{y},\dots, \bar{y})\right)}
    {\int_{x_n \in \mathbb{X}} \epsilon P^{\mu}\left(X_n \in d x_n \mid Y_{[0, n-1]}=(\bar{y},\dots, \bar{y})\right)}
    = P^{\mu}\left(X_n \in A \mid Y_{[0, n-1]}=(\bar{y},\dots, \bar{y})\right).
\end{align*}
The second line results from the Bayesian update equation. 
The last line derives from the fact that the current 
observation is independent of past observations given 
the current state, and the fact that 
under Assumption \ref{positive_eq}, 
$\Q(\bar{y}|x) = \epsilon$ for every $x \in \mathbb{X}$.

Then, the recursive application of the update rule leads to
\begin{align*}
    \pi_n^{\mu *}(A)=P^{\mu}\left(X_n \in A \mid Y_{[0, n]}=(\bar{y},\dots, \bar{y})\right)
    =P^{\mu}\left(X_n \in A \right).
\end{align*}
which by the uniform ergodicity of the transition kernel 
${\T}$, implies that $\pi_n^{\mu *}(\cdot)$ converges 
to $\pi^*$ under the total variation.
$\pi^{ *}$ is an aperiodic reachable state, as can 
be seen from the same argument presented 
in the final part of Lemma \ref{Cauchy}.
\end{proof}

\begin{corollary}\label{Main22} 
Under either Assumption~\ref{weakQtvT} or~\ref{weakTtvQ}, together with Assumption~\ref{positive_eq}, the filter process is uniquely ergodic.
Moreover, for any initial distribution, the filter process
converges weakly to the unique invariant measure.
\end{corollary}

\medskip

We conclude this section with another case where there exists a hidden state that is recoverable through a unique measurement, then the filter admits a reachable state.

\begin{assumption}\label{fully_recoverable_state} 
Assume that $\mathbb{X}$ is countable.  
The state $x \in \mathbb{X}$ is a reachable state for the transition kernel ${\T}$ in the sense that for every $z \in \mathbb{X}$ there exists $k \in \mathbb{Z}_+$ such that $P(X_k = x | X_0=z) > 0$. Furthermore, there exists an observation realization 
$\bar{y} \in \mathbb{Y}$ such that 
$\Q(\bar{y}\mid x)=1$ and $\Q(\bar{y}\mid x')=0$ for every $x'\neq x$.  
That is, the reachable state $x$ is fully recoverable.
\end{assumption}

\begin{lemma}\label{reachability2}
Under Assumption~\ref{fully_recoverable_state}, 
the Dirac measure $\delta_x$ is a 
topologically reachable state for the filter process.  
Furthermore, if $x$ is aperiodic for ${\T}$, then $\delta_x$ is a
topologically aperiodic reachable state for the filter process.
\end{lemma}

\begin{proof}
Under Assumption~\ref{fully_recoverable_state}, regardless of the initial distribution, the process visits state $x$ with positive probability since $x$ is reachable. Whenever the process enters $x$, the observation $\bar{y}$ occurs with probability $1$, so the filter state becomes $\delta_x$. Hence, $\delta_x$ is a topologically reachable state.  
If $x$ is also aperiodic for ${\T}$, then by the same argument, $\delta_x$ is a topologically aperiodic reachable state for the filter process.
\end{proof}

\begin{corollary}\label{Main23} 
Under either Assumption~\ref{weakQtvT} or~\ref{weakTtvQ}, together with Assumption~\ref{fully_recoverable_state}, the filter process is uniquely ergodic.
Moreover, if $x$ is aperiodic for ${\T}$, then for any initial distribution the filter process
converges weakly to the unique invariant measure.
\end{corollary}

\section{Proof of the Main Theorem}


\begin{lemma}\label{Regularity}
    Suppose $\mathbb{X}$ and $\mathbb{Y}$ are Polish spaces.
Then for all $n \in \mathbb{N}$ and $z,z' \in \mathcal{Z}$, 
\begin{itemize}
    \item[(i)] Under Assumption~\ref{weakQtvT}, we have
      $
    \rho_{B L}\left(\eta^n(\cdot \mid z), 
    \eta^n\left(\cdot \mid z^{\prime}\right)\right) \leq 
    2\left(1+\alpha \right) \rho_{B L}\left(z, z^{\prime}\right)
    $.
    \item[(ii)]Under Assumption \ref{weakTtvQ}, we have  $
    \rho_{B L}\left(\eta^n(\cdot \mid z), 
    \eta^n\left(\cdot \mid z^{\prime}\right)\right) \leq 
    2\left(2 +  \frac{\gamma \theta}{1-\theta} \right) \rho_{B L}\left(z, z^{\prime}\right).
    $ 
\end{itemize}
    
\end{lemma}

\begin{proof}[Proof of Lemma \ref{Regularity}]
    We adapt the argument from Kara and Y\"{u}ksel (\cite{kara2020near}, Theorem 7).
    We equip $\mathcal{Z}$
    with the metric $\rho_{BL}$ to define the bounded-Lipschitz 
    norm $|f|_{BL}$ for any Borel measurable function 
    $f: \mathcal{Z}\rightarrow \mathbb{R}$. 
    We treat the full measurement sequence $y_1^n := (y_1, y_2, \ldots, y_n)$  as a single observation variable to derive bounds uniformly in $n$.

    Define, for $z, z' \in \mathcal{Z}$ and $r\in R^+$:
    \begin{align}\label{rhodef}
        \rho_n(z,z',r) := 
        \sup_{f} \bigg| \int_{} f(y_1^n, x_n) \, \Pr(dy_1^n, dx_n | X_0 \sim z) 
         - \int f(y_1^n, x_n) \, \Pr(dy_1^n, dx_n | X_0 \sim z') \bigg|
    \end{align}
    where the supremum is over all measurable \( f : \mathbb{Y}^n \times \mathbb{X} \to \mathbb{R} \) such that \( \|f\|_{\infty} \leq 1 \),
     and  \( x_n \mapsto f(y_1^n, x_n) \) is $r$-Lipschitz for each fixed $y_1^n$.
     By definition, the following hold:
        \begin{align}
        &\left\| \Pr(Y_1^n \in \cdot \mid X_0 \sim z) - \Pr(Y_1^n \in \cdot \mid X_0 \sim z') \right\|_{TV} \leq \rho_n(z, z',r) , \label{1}\\
        &  \rho_n(z, z',r)  \leq \left\| \Pr((Y_1^n, X_n) \in \cdot \mid X_0 \sim z) - \Pr((Y_1^n, X_n) \in \cdot \mid X_0 \sim z') \label{2} \right\|_{TV}.
        \end{align}
        We now analyze the bounded-Lipschitz distance between the filtered distributions at time $n$, starting from two different initial beliefs $z$ and $z^{\prime}$ :
\begin{align} \label{secondlemma}
& \rho_{B L}\left(\eta^n(\cdot \mid z), \eta^n\left(\cdot \mid z^{\prime}\right)\right)\nonumber\\
& =\sup _{f\in \operatorname{BL}_1(\mathcal{Z})}\left|\int_{\mathbb{Y}} f\left(z_n\left(z_0^{\prime}, y_1^n\right)\right) P\left(d y_1^n \mid z_0^{\prime}\right)-\int_{\mathbb{Y}} f\left(z_n\left(z_0,  y_1^n\right)\right) P\left(d y_1^n \mid z_0\right)\right|
\end{align}
where $z_n\left(z, y_1^n\right):=F\left(F\left(\ldots F\left(z, y_1\right), \ldots\right), y_n\right)$ denotes the recursive filter update based on the observation sequence 
$y_1^n$.
To bound the expression, we decompose it as follows:
\begin{align}
&\left|\int_{\mathbb{Y}} f\left(z_n\left(z_0^{\prime}, y_1^n\right)\right) P\left(d y_1^n \mid z_0^{\prime}\right)-\int_{\mathbb{Y}} f\left(z_n\left(z_0,  y_1^n\right)\right) P\left(d y_1^n \mid z_0\right)\right|\nonumber\\
&\leq \left|\int_{\mathbb{Y}} f\left(z_n\left(z_0^{\prime}, y_1^n\right)\right) P\left(d y_1^n \mid z_0^{\prime}\right)-\int_{\mathbb{Y}} f\left(z_n\left(z_0^{\prime}, y_1^n\right)\right) P\left(d y_1^n \mid z_0\right)\right| \nonumber\\
&\quad + \int_{\mathbb{Y}}\left|f\left(z_n\left(z_0^{\prime}, y_1^n\right)\right)-f\left(z_n\left(z_0, y_1^n\right)\right)\right| P\left(d y_1^n \mid z_0\right) \nonumber\\
&\leq \norm{f}_{\infty} \left\|P\left(Y_1^n \mid z_0^{\prime} \right)-P\left(Y_1^n \mid z_0\right)\right\|_{T V}\nonumber\\
& \quad + \int_{\mathbb{Y}}\left|f\left(z_n\left(z_0^{\prime}, y_1^n\right)\right)-f\left(z_n\left(z_0, y_1^n\right)\right)\right| P\left(d y_1^n \mid z_0\right).\label{TVtv}
\end{align}
where the first term is bounded by $\norm{f}_{\infty} \rho_n(z, z^{\prime}, 1)$, by inequality (\ref{1}).

Now consider the second term in \eqref{TVtv}. Since $f \in \mathrm{BL}_1(\Z)$, we use the Lipschitz continuity of $f$ :
\begin{align}
    &\int_{\mathbb{Y}^n}\left|f\left(z_n\left(z_0^{\prime}, y_1^n\right)\right)
    -f\left(z_n\left(z_0, y_1^n\right)\right)\right| Pr\left(d y_1^n \mid z_0\right)\nonumber\\
    &\leq \norm{f}_{L} \int_{\mathbb{Y}^n}\rho_{BL}(z_n\left(z_0^{\prime}, y_1^n\right), z_n\left(z_0, y_1^n\right)) P\left(d y_1^n \mid z_0\right)\nonumber\\
&=\norm{f}_{L} \int_{\mathbb{Y}^n} \sup_{g \in \operatorname{BL}_1(\mathbb{X})}\left(\int g(x_n)z_n\left(z_0^\prime, y_1^n \right)(dx_n) - \int g(x_n)z_n\left(z_0, y_1^n\right)(dx_n)\right)P\left(d y_1^n \mid z_0\right)\label{eq3}
\end{align}
If we look at the term inside
\begin{align}\label{inside}
&\sup_{g \in \operatorname{BL}_1(\mathbb{X})}\left(\int_{\mathbb{X}} g(x_n)z_n\left(z_0^\prime, y_1^n \right)(dx_n) - \int_{\mathbb{X}} g(x_n)z_n\left(z_0, y_1^n\right)(dx_n)\right)\\
&=\sup_{g \in \operatorname{BL}_1(\mathbb{X})}\left(\int_{\mathbb{X}} g(x_1)w_{\mathbf{y}}(dx_1)\right),\nonumber
\end{align}
where $\mathbf{y}=y_1^n$ and $w_{\mathbf{y}}=(z_n\left(z_0^\prime, y_1^n\right)- z_n\left(z_0, y_1^n\right))$ 
which is a signed measure on $\mathbb{X}$.
Now, since 
$\operatorname{BL}_1(\mathbb{X})$ is closed, uniformly bounded and equicontinuos with respect to the sup-norm topology, the Arzelà–Ascoli theorem implies that 
 $\operatorname{BL}_1(\mathbb{X})$ is compact. 
Since a continuous function on a compact set attains its supremum,
 the set
$$
A_{\mathbf{y}}:=\left\{h_{\mathbf{y}}(x)=\arg\sup_{g \in \operatorname{BL}_1(\mathbb{X})}\left(\int_{\mathbb{X}} g(x)w_{\mathbf{y}}(dx)\right):h_{\mathbf{y}}(x)\in \BL(\mathbb{X})\right\}
$$
is nonempty for every ${\mathbf{y}}\in \mathbb{Y}^n$. The integral is continuous under respect to sup-norm.
Then, $A_{\mathbf{y}}$ is closed in the supremum norm topology.
Moreover, since $\mathbb{Y}^n$ and $\operatorname{BL}_1(\mathbb{X})$ are Polish spaces, the set
$\Gamma:=\left\{(\mathbf{y}, h_\mathbf{y}): h_{\mathbf{y}} \in A_{\mathbf{y}}\right\}$
is Borel measurable. 
By the Measurable Selection Theorem 
\footnote{[\cite{himmelberg1976optimal}, Theorem 2][Kuratowski Ryll-Nardzewski Measurable Selection Theorem]
Let $\mathbb{X}, \mathbb{Y}$ be Polish spaces and $\Gamma=(x, \psi(x))$ where $\psi(x) \subset \mathbb{Y}$ be such that, $\psi(x)$ is closed for each $x \in \mathbb{X}$ and let $\Gamma$ be a Borel measurable set in $\mathbb{X} \times \mathbb{Y}$. Then, there exists at least one measurable function $f: \mathbb{X} \rightarrow \mathbb{Y}$ such that $\{(x, f(x)), x \in \mathbb{X}\} \subset \Gamma$.
} there exists a measurable function
$h:\mathbb{Y}^n\to \operatorname{BL}_1(\mathbb{X})$ 
such that $h({\mathbf{y}})\in A_{\mathbf{y}}$ for all ${\mathbf{y}} \in \mathbb{Y}^n$. 
We denote this selection by  $g_{\mathbf{y}}=h({\mathbf{y}})$
and use it to replace the supremum inside the integral in \eqref{eq3}. Thus:
\begin{align}
&\int_{\mathbb{Y}^n} \sup_{g \in \operatorname{BL}_1(\mathbb{X})}\left(\int_{\mathbb{X}} g(x_n)z_n\left(z_0^\prime, y_1^n \right)(dx_n) - \int_{\mathbb{X}} g(x_n)z_n\left(z_0, y_1^n\right)(dx_n)\right)P\left(d y_1^n \mid z_0\right)\nonumber\\
&=\int_{\mathbb{Y}^n} \left(\int_{\mathbb{X}} g_{{\mathbf{y}}}(x_n)z_n\left(z_0^\prime, {\mathbf{y}} \right)(dx_n) - \int_{\mathbb{X}} g_{\mathbf{y}}(x_n)z_n\left(z_0, {\mathbf{y}}\right)(dx_n)\right) P\left(d {\mathbf{y}} \mid z_0\right)\nonumber\\
&=\int_{\mathbb{Y}^n} \int_{\mathbb{X}} g_{\mathbf{y}}(x_n)z_n\left(z_0^\prime, {\mathbf{y}}\right)(dx_n) P(d {\mathbf{y}} | z_0)-
\int_{\mathbb{Y}^n}\int_{\mathbb{X}} g_{\mathbf{y}}(x_n)z_n\left(z_0^\prime, {\mathbf{y}}\right)(dx_n) P(d {\mathbf{y}} | z_0^\prime)\nonumber\\
&+\int_{\mathbb{Y}^n} \int_{\mathbb{X}} g_{\mathbf{y}}(x_n)z_n\left(z_0^\prime, {\mathbf{y}}\right)(dx_n) P(d {\mathbf{y}} \mid z_0^\prime)-
\int_{\mathbb{Y}^n}\int_{\mathbb{X}} g_{\mathbf{y}}(x_n)z_n\left(z_0, {\mathbf{y}}\right)(dx_n) P(d {\mathbf{y}} \mid z_0)\nonumber \\
& \leq 2 \cdot \rho_n(z, z',1) \label{sup}
\end{align}
by the inequality (\ref{1}) and the definition of (\ref{rhodef}).

\noindent Finally, using this bound along with \eqref{TVtv} and \eqref{sup}, we obtain:
\begin{align}\label{final}
\rho_{B L}\left(\eta^n(\cdot \mid z), 
\eta^n\left(\cdot \mid z^{\prime}\right)\right) \leq 
\sup_{f\in \operatorname{BL}_1(\mathcal{Z}) } \left(\norm{f}_{\infty} + 2 \norm{f}_{L}  \right) \rho_n(z, z',1)
\leq 2 \cdot \rho_n(z, z',1).
\end{align}
This shows that it suffices to uniformly bound $\rho_n\left(z, z^{\prime}, 1\right)$ to complete the proof. We now derive such a bound under the given assumptions.

(i) Since the process satisfies the Markov property  
$X_0 \rightarrow X_1 \rightarrow\left(Y_1^n, X_n\right)$, we can upper bound the total variation between the joint distributions of
$\left(Y_1^n, X_n\right)$ as follows:
$$ \rho_n(z, z',1)\leq 
\|\Pr(Y_1^n, X_n|z) - \Pr(Y_1^n, X_n|z^{\prime}) \|_{TV} \leq  
\|\Pr(X_1|X_0=z) - \Pr(X_1|X_0=z^{\prime}) \|_{TV}
$$
Then, using Assumption~\ref{weakQtvT} and the definition of the bounded-Lipschitz metric, we obtain
$$
\rho_n(z, z',1)\leq \|\Pr(X_1|X_0=z) - \Pr(X_1|X_0=z^{\prime}) \|_{TV} \leq (1+ \alpha) \cdot \rho_{BL}(z, z^{\prime}).
$$

(ii)We now establish a recursive upper bound for \( \rho_n(z, z', r) \), valid for any \( r \in \mathbb{R}^+ \), under Assumption~\ref{weakTtvQ}.
\begin{align*}
    & \rho_n(z, z', r) 
    = \sup_{f} 
    \left| \int f(y_1^n, x_n) \, \Pr(dy_1^n, dx_n |X_0 \sim z) 
    - \int f(y_1^n, x_n) \, \Pr(dy_1^n, dx_n |X_0 \sim z') \right| \\
    &= \sup_{f} 
    \left| \int h_f(y_1^{n-1}, x_{n-1}) \Pr(dy_1^{n-1}, dx_{n-1} | z) 
    - \int h_f(y_1^{n-1}, x_{n-1}) \Pr(dy_1^{n-1}, dx_{n-1} | z') \right|,
    \end{align*}
    where
    \(
    h_f(y_1^{n-1}, x_{n-1}) := \int f(y_1^n, x_n) \, Q(dy_n \mid x_n) \, \mathcal{T}(dx_n \mid x_{n-1}).
    \)
    Since \( \|f\|_\infty \leq 1 \), we also have \( \|h_f\|_\infty \leq 1 \). To control the Lipschitz constant of \( h_f \), fix \( y_1^{n-1} \) and consider:
    \begin{align*}
    & h_f(y_1^{n-1}, x_{n-1}) - h_f(y_1^{n-1}, x_{n-1}') =
     \int_{\mathbb{X}} w(x_n) \, \mathcal{T}(dx_n | x_{n-1}) 
    - \int_{\mathbb{X}} w(x_n) \, \mathcal{T}(dx_n | x_{n-1}'),
    \end{align*}
    where 
    \(
    w(x_n) := \int_{\mathbb{Y}} f(y_n, x_n) \, Q(dy_n \mid x_n).
    \)
    Using the Lipschitz property of \( f \) in the second argument and Assumption~\ref{weakTtvQ}-(ii):
    \begin{align*}
    &|w(x_n) - w(x_n')| 
    = \left| \int f(y_n, x_n) \, Q(dy_n \mid x_n) - \int f(y_n, x_n') \, Q(dy_n \mid x_n') \right| \\
    &\leq \int |f(y_n, x_n) - f(y_n, x_n')| \, Q(dy_n \mid x_n) 
    + \int f(y_n, x_n') \, |Q(dy_n \mid x_n) - Q(dy_n \mid x_n')| \\
    &\leq \norm{f}_L d(x_n, x_n') + \gamma \cdot d(x_n, x_n') = (r + \gamma) \cdot d(x_n, x_n'),
    \end{align*}
    Then, applying Assumption~\ref{weakTtvQ}-(i), we obtain:
    \(
    \|h_f\|_L \leq \theta \cdot \|w\|_L \leq (r + \gamma) \theta.
    \)
    Consequently, we obtain the recursive relation:
    \(
    \rho_n(z, z', r) \leq  \rho_{n-1}(z, z', (r + \gamma) \theta).
    \)
    Applying this recursively completes the proof:
    \[
    \rho_n(z, z',1) \leq \cdots \leq \rho_0(z, z',  \theta^n + \gamma \theta \frac{1-\theta^{n}}{1-\theta}) \leq \left(1+  \theta^n + \gamma \theta \frac{1-\theta^{n}}{1-\theta} \right) \cdot \rho_{BL}(z, z').
    \]
\end{proof}
We define the $L$-chain property as follows, which will be a critical notion for our analysis:
\begin{definition}
A Markov chain with transition kernel $P$ is called an $L$-chain if for each Lipschitz continuous function with compact support, the sequence of functions $\{\int P^n(d y\mid z) f(y), n \in \mathbb{N}\}$ are equi-continuous.
\end{definition}


\begin{theorem}\label{L-chain} Under either Assumption~\ref{weakQtvT} or~\ref{weakTtvQ},             
    the filter process is an L-chain. 
\end{theorem}
\begin{proof}
$\{\int \eta^n(d y | z) f(y), n \in \mathbb{N} \}$ is equi-continuous
for $f$ Lipschitz continuous function with compact support, because $f/(\norm{f}_L+\norm{f}_\infty) \in BL_1(Z)$ and 
result follows by Lemma \ref{Regularity}.
\end{proof}

An $L$-chain, by definition, satisfies the weak Feller property. 
Therefore, proving that a chain is an $L$-chain is sufficient 
to show that it also satisfies the weak Feller property. 
\begin{definition}
Let $\pi$ be a probability measure on $\mathbb{X}$ with metric $d$. The topological support of $\pi$ is defined with
$$
\operatorname{supp} \pi:=\left\{x: \pi\left(B_r(x)\right)>0\right\}, \quad \forall r>0,
$$
where $B_r(x)=\{y \in \mathbb{X}: d(x, y)<r\}$.
\end{definition}

\begin{theorem}[\cite{Hernandez-Lerma2003}, Theorem 7.2.3] \label{compactness}
    Let $\left\{x_t\right\}$ be a weak Feller Markov
    process taking values in a compact subset of a complete,
    separable metric space. Then $\left\{x_t\right\}$
    admits an invariant probability measure.
\end{theorem}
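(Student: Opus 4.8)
The plan is to run the classical Krylov--Bogoliubov construction: form the Ces\`aro (occupation) averages of the transition probabilities, extract a weakly convergent subsequence using compactness of the state space, and then use the weak Feller property to show that the limit is invariant. Denote the compact state space by $K$ and the transition kernel by $P$.

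First I would fix an arbitrary point $x \in K$ and define the occupation measures
$$
\mu_n := \frac{1}{n}\sum_{k=0}^{n-1} P^k(x, \cdot), \qquad n \in \mathbb{N},
$$
each of which is a probability measure on $K$. Since $K$ is compact, every sequence of measures in $\mathbb{P}(K)$ is tight, so by Prokhorov's theorem $\mathbb{P}(K)$ is weakly sequentially compact. Hence there is a subsequence $\{\mu_{n_j}\}$ converging weakly to some $\mu \in \mathbb{P}(K)$; this $\mu$ is the candidate invariant measure.

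Next I would establish an approximate-invariance identity. For any bounded continuous $f$, a telescoping computation gives
$$
\int_K (Pf - f)\, d\mu_n = \frac{1}{n}\bigl(P^n f(x) - f(x)\bigr),
$$
whose modulus is at most $2\norm{f}_\infty / n$ and therefore vanishes as $n \to \infty$. The crucial step is passing to the limit along $\{\mu_{n_j}\}$. Here the weak Feller hypothesis enters decisively: it guarantees $Pf \in \mathbb{C}(K)$ whenever $f \in \mathbb{C}(K)$, so both $f$ and $Pf$ are legitimate test functions for weak convergence, and
$$
\int_K (Pf - f)\, d\mu = \lim_{j\to\infty} \int_K (Pf - f)\, d\mu_{n_j} = 0.
$$
Since $f \in \mathbb{C}(K)$ was arbitrary, this yields $\mu P = \mu$, so $\mu$ is an invariant probability measure.

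The main obstacle is exactly this last limiting step: the averaged identity only delivers near-invariance of $\mu_n$, and without continuity of $Pf$ the weak convergence $\mu_{n_j} \to \mu$ would not allow the integral against $Pf$ to pass to the limit. Continuity of $Pf$, supplied by the weak Feller property, is precisely what upgrades the asymptotic near-invariance of the averages into exact invariance of the limit.
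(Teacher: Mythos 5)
Your proof is correct and is exactly the classical Krylov--Bogoliubov argument that the paper relies on: the statement is quoted from \cite{Hernandez-Lerma2003} (Theorem 7.2.3), and both that reference and the paper's own usage (``a Krylov-Bogoliubov argument implies that there exists at least one invariant probability measure'') rest on the same Ces\`aro-average construction, Prokhorov compactness of $\mathbb{P}(K)$, and the weak Feller property to pass the near-invariance identity to the limit. No gaps; the telescoping bound $2\norm{f}_\infty/n$ and the use of $Pf \in \mathbb{C}(K)$ as a test function are precisely the key steps of the standard proof.
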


\begin{theorem}\label{Essential}
    Let a Markov chain be an \emph{L-chain} with compact state space $\mathcal{Z}$, and suppose there exists a topologically reachable state $z^* \in \mathcal{Z}$. 
   Then, (i) there exists a unique invariant probability measure, and (ii)
     if the reachable state $z^*$ is topologically aperiodic, the Markov process converges weakly to the unique invariant
     probability measure for every initial prior.
\end{theorem}

\begin{proof} By Theorem \ref{compactness} there is an invariant probability measure. Suppose that there were two different probability measures $\nu_1, \nu_2$. We may assume $\nu_1$ and $\nu_2$ to be ergodic, via an ergodic decomposition argument of invariant measures [\cite{Tweedie-94}, Theorem 6.1].
    Now, by equicontinuity and the Arzelà–Ascoli theorem, for every bounded and Lipschitz continuous function $f$, the sequence of functions   \begin{align}\label{subseq}
    \eta^{(N)}(f)(z):=\frac{1}{N} \sum_{k=0}^{N-1} \int \eta^k(d y|z) f(y)
    \end{align}
    has a convergent subsequence (in the sup norm) with a continuous limit $F_f^*: \mathcal{Z}\rightarrow\mathbb{R}$.
   
Since $z^*$ is a reachable state, 
for every neighborhood $B_r(z^*)$ with 
$r>0$, both invariant probability measures 
$\nu_1$ and $\nu_2$ assign positive 
probability to $B_r(z^*)$. Thus, $z^*$ 
belongs to the topological support of 
both $\nu_1$ and $\nu_2$. Then, there exist sequences ${x_n}$ and ${y_n}$ converging to $z^*$, 
for which the ergodic theorem holds. More explicitly, for any bounded and continuous function $f$, we have
$\lim_{N \rightarrow \infty} \eta^{(N)}(f)(y_n) = \langle \nu_1, f\rangle := \int_{\mathcal{Z}} f(z)\,\nu_1(dz)$,
$\lim_{N \rightarrow \infty} \eta^{(N)}(f)(x_n) = \langle \nu_2, f\rangle := \int_{\mathcal{Z}} f(z)\,\nu_2(dz)$.

The above imply that, every bounded and Lipschitz continuous function $f$, the term
$$
\lim _{n \rightarrow \infty}\left|\lim _{N \rightarrow \infty} \eta^{(N)}(f)\left(y_n\right)-\eta^{(N)}(f)\left(x_n\right)\right|=0 .
$$
Suppose not; there would be an $\epsilon>0$ and a subsequence $n_k$ for which the difference
$$
\left|\lim _{N \rightarrow \infty} \eta^{(N)}(f)\left(y_{n_k}\right)-\eta^{(N)}(f)\left(x_{n_k}\right)\right|>\epsilon .
$$
However, for each fixed $n_k$, we have that
$
\lim _{N \rightarrow \infty} \eta^{(N)}(f)\left(y_{n_k}\right)
$
converges by the ergodicity of $\nu_1$ to 
$\left\langle\nu_1, f\right\rangle:=\int_{\mathcal{Z}}f(z)\nu_1(dz)$ 
and the limit, by the Arzela-Ascoli theorem, will be equal to $F_f^*\left(y_{n_k}\right)$ (as every converging subsequence would have to converge to the limit; which also implies that the subsequential convergence in (\ref{subseq}) is in fact a sequential convergence). The same argument applies for $\eta^{(N)}(f)\left(x_{n_k}\right) \rightarrow\left\langle\nu_2, f\right\rangle=F_f^*\left(x_{n_k}\right)$.

The above would then imply that $\left|F_f^*\left(y_{n_k}\right)-F_f^*\left(x_{n_k}\right)\right| \geq \epsilon$ for every $\left(y_{n_k}, x_{n_k}\right)$. 
This would be a contradiction due to the continuity of $F_f^*$. 
Therefore, the time averages of $f$ under $\nu_1$ and $\nu_2$ will be arbitrarily close to each other. 
Since the class of bounded and Lipschitz continuous functions is separating, it follows that $\nu_1 = \nu_2$.
This establishes the uniqueness of the invariant measure. Let this unique invariant measure be denoted by $\nu$.

(ii)
 Now let us prove that if $z^*$ is an topological aperiodic state, 
 then for any initial state, 
 the Markov process weakly converges to $\nu$. 
 For this, we will show that the approach given in 
 \cite[Theorem 18.4.4-ii]{MeynBook} for $e-$chains
is also valid for $L-$chains. 
\yed{For the continuous-time setting, a different proof technique has been used to establish a similar result for $L$-chains; see Theorem~1 of \cite{komorowski2010ergodicity}.}

First, consider any bounded and Lipschitz continuous function $f$. Without loss of generality, assume that
\(
\int_\mathcal{Z} f(z) \, \nu(dz) = 0,
\)
since we can always subtract a constant from $f$ to ensure this.

By the Arzelà–Ascoli theorem and the equicontinuity of the sequence, the family
\begin{align}\label{seq}
\eta^N(f)(z) := \int \eta^N(d y|z) f(y)
\end{align}
has a subsequence that converges uniformly (in the sup-norm) to a limit function $H_f^* : \mathcal{Z} \rightarrow \mathbb{R}$, where $H_f^*$ is continuous. Therefore, there exists a subsequence $\eta^{k_i}(f)$ that converges uniformly to $H_f^*$.

For every $k \in \mathbb{N}$, we have
\begin{align*}
    \langle \nu,|\eta^k(f)|\rangle&=
    \int_\mathcal{Z} |\eta^k(f)|(z)\nu(dz)=
    \int_\mathcal{Z} \eta(|\eta^k(f)|)(z)\nu(dz)\\
    &=\int_\mathcal{Z}\int_\mathcal{Z} |\eta^k(f)|(y)\eta(d y|z)\nu(dz)
    \geq \int_\mathcal{Z}|\eta^{k+1}(f)(z)|\nu(dz)=
    \langle \nu,|\eta^{k+1}(f)|\rangle
    \end{align*}
where the inequality follows from Jensen's inequality.

By the Monotone Convergence Theorem, the sequence $\langle \nu, |\eta^k(f)| \rangle$ converges to a real limit, denoted by $v$. Since $\eta^{k_i}(f)$ converges uniformly to $H_f^*$, the shifted sequence $\eta^{k_i + m}(f)$ converges to $\eta^m(H_f^*)$ for each $m \in \mathbb{N}$. Hence, we obtain:
\begin{align}\label{H_f_abs}
\langle \nu, |\eta^m(H_f^*)| \rangle = \langle \nu, |H_f^*| \rangle = v, \quad \forall m \in \mathbb{N}.
\end{align}

We now use the topological aperiodicity of $z^*$ to prove that $H_f^*(z) = 0$ for every $z$ in the topological support of $\nu$.

Consider any open neighborhood of $z^*$, denoted by $O$. By the weak Feller property of $\eta$ and the Portmanteau theorem, the function $\eta^k(O|\cdot)$ is lower semi-continuous. Therefore, for sufficiently large $k$, there exists an open neighborhood $\bar{O}$ around $z^*$ such that $\eta^k(O|z) > 0$ for every $z \in \bar{O}$.
Since $z^*$ is reachable, the Markov chain starting from 
$\nu$ visits $\bar{O}$ within a finite time
from almost every initial state. 
This proves that $\nu(O)>0$

 Next, we show that $H_f^*(z)=0$ 
 for all $z$ in the topological support of $\nu$. 
 Assume, by contradiction, that this is not true. Then the sets
 $O^+=\{z:H_f^*(z)>0\}$ and $O^-=\{z:H_f^*(z)<0\}$ 
 both have positive $\nu$-measure, since $\langle \nu, H_f^* \rangle = \langle \nu, f \rangle = 0$. Because $H_f^*$ is continuous, both $O^+$ and $O^-$ are open.
 Due to ergodicity, 
 and similar to above arguments, 
 we know that for any bounded continuous 
 function $h$, $F_h^*(z^*)$ is equal to 
 $\left\langle\nu, h\right\rangle$. 
 Since $\nu(O^+)>0$ and $O^+$ is open, 
 we can define the function $h$ to be $0$ outside $O^+$ 
 and $\left\langle\nu, h\right\rangle>0$. 
 In this case, $F_h^*(z^*)>0$ 
 which proves that there exists a 
 $k_1\in \mathbb{N}$ such that 
 $\eta^{k_1}(O^+|z^*)>0$. Similarly, 
 there exists a $k_2\in \mathbb{N}$ 
 such that $\eta^{k_2}(O^-|z^*)>0$. 
 As $\eta^{k}(O^+|.)$ is lower semi-continuous, 
 there exists an open neighborhood 
 $O^\prime$ around $z^*$ such that for 
 every $z\in O^\prime$, $\eta^{k_1}(O^+|z)>0$ 
 and $\eta^{k_2}(O^-|z)>0$. 
 As $z^*$ is topologically aperiodic, 
 for all sufficiently large $k$, $\eta^{k}(O^\prime|z^*)>0$. 
 Therefore, there exists an $l\in \mathbb{N}$ 
 such that $\eta^{l}(O^+|z^*)>0$ and $\eta^{l}(O^-|z^*)>0$. 
 Again, since $\eta^{l}(O^+|.)$ and $\eta^{l}(O^-|.)$ 
 are lower semi-continuous, 
 there exists an open neighborhood $N$ around 
 $z^*$ such that for every $z\in N$, $\eta^{l}(O^+|z)>0$ 
 and $\eta^{l}(O^-|z)>0$. 
 Therefore, for every $z \in N$, 
 $|\eta^l(H_f^*)(z)|<\eta^l(|H_f^*|)(z)$. 
 Since $z^*$ is in the topological support, 
 we know that $\nu(N)>0$. From here, we reach the conclusion that
$
 \langle \nu,|\eta^l(H_f^*)|\rangle <
 \langle\nu,\eta^l(|H_f^*|)\rangle =
 \langle\nu, |H_f^*|\rangle
$
 where the equality comes from $\nu$ being stationary. 
 This contradicts the equality (\ref{H_f_abs}).

This proves that $H_f^*(z)=0$ for every $z$ 
in the topological support of $\nu$. Therefore, the series 
$\eta^{k_i}(f)$ uniformly converges to $0$
in the topological support of $\nu$. So, for any $\epsilon>0$, 
there exists a $k\in \mathbb{N}$ such that for every 
$z$ in the topological support of $\nu$, $|\eta^{k}(f)(z)|<\epsilon$ holds. 
The set
\(
O^k_\epsilon=\{ z\in \mathcal{Z}: |\eta^k(f)(z)|<\epsilon \}
\)
is an open set and contains all elements in the topological 
support of $\nu$. Therefore, $\nu(O^k_\epsilon)=1$.
For any $m \in \mathbb{N}$ and for any $z$ in the topological support of $\nu$,
\begin{align*}
&|\eta^{k+m}(f)(z)|=
\bigg|\int_\mathcal{Z}\eta^m(dy|z)\eta^k(f)(y) \bigg|=\\
&\bigg|\int_{O^k_\epsilon}\eta^m(dy|z)\eta^k(f)(y) + 
\int_{\mathcal{Z} \setminus O^k_\epsilon}\eta^m(dy|z)\eta^k(f)(y)\bigg|=
\bigg|\int_{O^k_\epsilon}\eta^m(dy|z)\eta^k(f)(y)\bigg| \leq\epsilon.
\end{align*}
This final equality arises because
$z$ is within the topological support of invariant probability measure
$\nu$,
leading to $\eta^m(\mathcal{Z}\setminus O^k_\epsilon|z)=0$.
This shows that $\eta^N(f)$ converges to $0$ 
for every element 
in the topological support of $\nu$.

Next, we extend this convergence to every point $z \in \mathcal{Z}$. 
Define the sequence $\bar{\eta}^N(.|z):=\frac{1}{N}\sum_{i=1}^{N} \eta^i(.|z)$
which converges weakly to the unique invariant measure $\nu$, 
because since $\mathcal{Z}$ is compact, 
the sequence has a limit, and the limit is a stationary probability measure; by uniqueness, it is $\nu$.

For any $\epsilon>0$, the set
\(
O_\epsilon=\{ z\in \mathcal{Z}: \limsup_{k \to \infty}
|\eta^k(f)|<\epsilon \}
\)
is an open set and contains all elements in the topological 
support of $\nu$. 
Therefore, $\nu(O_\epsilon)=1$, 
meaning for any $z\in\mathcal{Z}$, 
we have $\lim_{N \to \infty}\bar{\eta}^N(O_\epsilon|z)=1$. 
From this, for a sufficiently large 
$M\in \mathbb{N}$, $\eta^M(O_\epsilon|z)>1-\epsilon/
\norm{f}_\infty$ holds.

As a result,
\begin{align*}
&|H_f^*(z)|\leq \limsup_{k \to \infty} |\eta^{k+M}(f)(z)|
\leq \norm{f}_\infty \eta^M(O_\epsilon^C|z)+
\limsup_{k \to \infty} \int_{O_\epsilon}
\eta^M(dy|z) |\eta^k(f)(y)|\leq 2\epsilon
\end{align*}

Thus, for every $z\in \mathcal{Z}$, $H_f^*(z)=0$. 
The convergence is uniform,
so the series 
$\eta^{k_i}(f)$ uniformly converges to $0$.
For any $\epsilon>0$, 
there exists a $k\in \mathbb{N}$ such that for every 
$z\in \mathcal{Z}$, $|\eta^{k}(f)(z)|<\epsilon$ holds. 
For any $m \in \mathbb{N}$,
\(
|\eta^{k+m}(f)(z)|=
\bigg|\int_\mathcal{Z}\eta^m(dy|z)\eta^k(f)(y) \bigg|\leq\epsilon
\)

Therefore, $\eta^{N}(f)(z)$ uniformly converges to 
$\langle\nu, f\rangle=0$.
This is valid for all bounded and Lipschitz continuous 
functions and these functions are dense in the set of 
bounded continuous functions.  Consequently, 
for every $z\in \mathcal{Z}$, $\eta^N(.|z)$ weakly converges to $\nu$.
\end{proof}

\begin{proof}[Proof of Theorem \ref{Main2}]    
    Theorem \ref{L-chain} proves that 
    the filter process is an $L$-chain. 
    We also know that 
    $\mathcal{Z}$ is compact.
    We complete the proof using Theorem \ref{Essential}.
\end{proof}

\bibliographystyle{plain}
\bibliography{unique_ergodic}

\begin{thebibliography}{10}

\bibitem{aumann1961mixed}
R.~J. Aumann.
\newblock Mixed and behavior strategies in infinite extensive games.
\newblock Technical report, Princeton University NJ, 1961.

\bibitem{Bil99}
P.~Billingsley.
\newblock {\em Convergence of probability measures}.
\newblock New York: Wiley, 2nd edition, 1999.

\bibitem{blackwell1959entropy}
D.~Blackwell.
\newblock The entropy of functions of finite-state markov chains.
\newblock {\em Matematika}, 3(5):143--150, 1959.

\bibitem{Bog07}
V.I. Bogachev.
\newblock {\em Measure Theory: Volume II}.
\newblock Springer, 2007.

\bibitem{Budhiraja}
A.~Budhiraja.
\newblock Asymptotic stability, ergodicity and other asymptotic properties of
  the nonlinear filter.
\newblock {\em Ann. Inst. H. Poincar\'e Probab. Statist}, 39:919--941, 2003.

\bibitem{chigansky2010complete}
P.~Chigansky and R.~Van Handel.
\newblock A complete solution to {B}lackwell's unique ergodicity problem for
  hidden {M}arkov chains.
\newblock {\em The Annals of Applied Probability}, 20(6):2318--2345, 2010.

\bibitem{chigansky2009intrinsic}
P.~Chigansky, R.~Liptser, and R.~van Handel.
\newblock Intrinsic methods in filter stability.
\newblock {\em Handbook of Nonlinear Filtering}, 2009.

\bibitem{CrDo02}
D.~Crisan and A.~Doucet.
\newblock A survey of convergence results on particle filtering methods for
  practitioners.
\newblock {\em IEEE Transactions on Signal Processing}, 50(3):736--746, 2002.

\bibitem{dobrushin1956central}
R.L. Dobrushin.
\newblock Central limit theorem for nonstationary {M}arkov chains. i.
\newblock {\em Theory of Probability \& Its Applications}, 1(1):65--80, 1956.

\bibitem{FeKaZg14}
E.A. Feinberg, P.O. Kasyanov, and M.Z. Zgurovsky.
\newblock Partially observable total-cost {M}arkov decision process with weakly
  continuous transition probabilities.
\newblock {\em Mathematics of Operations Research}, 41(2):656--681, 2016.

\bibitem{kochman2006simple}
F.Kochman and J.~Reeds.
\newblock A simple proof of kaijser’s unique ergodicity result for hidden
  markov $\alpha$-chains.
\newblock {\em The Annals of Applied Probability}, 16(4):1805--1815, 2006.

\bibitem{le2004stability}
F.~Le Gland and N.~Oudjane.
\newblock Stability and uniform approximation of nonlinear filters using the
  {H}ilbert metric and application to particle filters.
\newblock {\em The Annals of Applied Probability}, 14(1):144--187, 2004.

\bibitem{Hairer}
M.~Hairer.
\newblock {\em Convergence of {M}arkov Processes, Lecture Notes, University of
  Warwick}.
\newblock 2010.

\bibitem{Hernandez-Lerma2003}
O.~Hern\'andez-Lerma and J.~B. Lasserre.
\newblock {\em {M}arkov chains and invariant probabilities}.
\newblock Birkh\"auser-Verlag, Basel, 2003.

\bibitem{himmelberg1976optimal}
C.~J. Himmelberg, T.~Parthasarathy, and F.~S.~Van Vleck.
\newblock Optimal plans for dynamic programming problems.
\newblock {\em Mathematics of Operations Research}, 1(4):390--394, 1976.

\bibitem{Kaijser}
T.~Kaijser.
\newblock A limit theorem for partially observed {M}arkov chains.
\newblock {\em Annals of Probability}, 4(677), 1975.

\bibitem{kaijser2011markov}
T.~Kaijser.
\newblock On markov chains induced by partitioned transition probability
  matrices.
\newblock {\em Acta Mathematica Sinica, English Series}, 27(3):441--476, 2011.

\bibitem{KSYWeakFellerSysCont}
A.D Kara, N.~Saldi, and S.~Y\"uksel.
\newblock Weak {F}eller property of non-linear filters.
\newblock {\em Systems \& Control Letters}, 134:104--512, 2019.

\bibitem{kara2020near}
A.D Kara and S.~Y\"uksel.
\newblock Near optimality of finite memory feedback policies in partially
  observed markov decision processes.
\newblock {\em Journal of Machine Learning Research}, 23(11):1--46, 2022.

\bibitem{komorowski2010ergodicity}
T.~Komorowski, S.~Peszat, and T.~Szarek.
\newblock On ergodicity of some markov processes.
\newblock {\em The Annals of Probability}, 38(4):1401--1443, 2010.

\bibitem{DiMasiStettner2005ergodicity}
G.~Di Masi and L.~Stettner.
\newblock Ergodicity of hidden markov models.
\newblock {\em Mathematics of Control, Signals and Systems}, 17(4):269--296,
  2005.

\bibitem{MeynBook}
S.~P. Meyn and R.~Tweedie.
\newblock {\em {M}arkov Chains and Stochastic Stability}.
\newblock Springer-Verlag, London, 1993.

\bibitem{Par67}
K.R. Parthasarathy.
\newblock {\em Probability Measures on Metric Spaces}.
\newblock AMS Bookstore, 1967.

\bibitem{da1996ergodicity}
G.~Da Prato and J.~Zabczyk.
\newblock {\em Ergodicity for infinite dimensional systems}, volume 229.
\newblock Cambridge University Press, 1996.

\bibitem{Rhe74}
D.~Rhenius.
\newblock Incomplete information in {M}arkovian decision models.
\newblock {\em Ann. Statist.}, 2:1327--1334, 1974.

\bibitem{Tweedie-94}
R.~L. Tweedie.
\newblock Topological conditions enabling use of {H}arris methods in discrete
  and continuous time.
\newblock {\em Acta Appl. Math.}, 34(1-2):175--188, 1994.

\bibitem{Handel}
R.~van Handel.
\newblock The stability of conditional {M}arkov processes and {M}arkov chains
  in random environments.
\newblock {\em Ann. Probab}, 37:1876--1925, 2009.

\bibitem{villani2008optimal}
C.~Villani.
\newblock {\em Optimal Transport: Old and New}.
\newblock Springer, 2008.

\bibitem{Yus76}
A.A. Yushkevich.
\newblock Reduction of a controlled {M}arkov model with incomplete data to a
  problem with complete information in the case of {B}orel state and control
  spaces.
\newblock {\em Theory Prob. Appl.}, 21:153--158, 1976.

\end{thebibliography}



\end{document}